\newtheorem{theorem}{Theorem}[section]
\newtheorem{lemma}[theorem]{Lemma}
\newtheorem{corollary}[theorem]{Corollary}
\theoremstyle{definition}
\numberwithin{equation}{section}
 \theoremstyle{plain}    
 \numberwithin{equation}{section} 
 \numberwithin{figure}{section} 
 \theoremstyle{plain}    
 \theoremstyle{plain}    
 \theoremstyle{remark}    
 \newtheorem*{acknowledgement*}{Acknowledgement} 
\newcommand{\cF}{{\mathcal F}}
\newcommand{\cP}{{\mathcal P}}
\newcommand{\Om}{{\Omega}}
\newcommand{\om}{{\omega}}
\newcommand{\ve}{{\varepsilon}}
\newcommand{\del}{{\delta}}
\newcommand{\Del}{{\Delta}}
\newcommand{\gam}{{\gamma}}
\newcommand{\Gam}{{\Gamma}}
\newcommand{\vf}{{\varphi}}
\newcommand{\Sig}{{\Sigma}}
\newcommand{\sig}{{\sigma}}
\newcommand{\al}{{\alpha}}
\newcommand{\be}{{\beta}}
\newcommand{\ka}{{\kappa}}
\newcommand{\la}{{\lambda}}
\newcommand{\bbN}{{\mathbb N}}
\newcommand{\bbR}{{\mathbb R}}
\newcommand{\bbI}{{\mathbb I}}
\begin{document}
\title[]{Functional Erd\H os-R\' enyi Law of Large Numbers for Nonconventional
Sums under Weak Dependence.}%
 \vskip 0.1cm 
 \author{ Yuri Kifer\\
\vskip 0.1cm
Institute of Mathematics\\
Hebrew University\\
Jerusalem, Israel}%
\address{
 Institute of Mathematics, The Hebrew University, Jerusalem 91904, Israel}%
\email{ kifer@math.huji.ac.il}%

\thanks{}
\subjclass[2000]{Primary: 60F15 Secondary: 60F10, 60F17, 37D20, 37D35}%
\keywords{laws of large numbers, large deviations, nonconventional sums, 
hyperbolic  diffeomorphisms and flows, Markov processes.}%

\date{\today}
\begin{abstract}\noindent
We obtain a functional Erd\H os--R\' enyi law of large numbers for 
"nonconventional" sums of the form $\Sig_n=\sum_{m=1}^n
F(X_m,X_{2m},...,X_{\ell m})$ where $X_1,X_2,...$ is a sequence of exponentially
 fast $\psi$-mixing random vectors and $F$ is a Borel vector function extending
 in several directions \cite{Ki5} where only i.i.d. random variables 
 $X_1,X_2,...$ were considered.
\end{abstract}
\maketitle
\markboth{Y.Kifer}{Nonconventional sums} 
\renewcommand{\theequation}{\arabic{section}.\arabic{equation}}
\pagenumbering{arabic}

\section{Introduction}\label{sec1}\setcounter{equation}{0}

Let $X_1,X_2,...$ be a sequence of independent identically distributed
(i.i.d.) random variables such that $EX_1=0$ and the moment generating
function $\phi(t)=Ee^{tX_1}$ exists. Denote by $I$ the Legendre transform 
of $\ln \phi$ and set $\Sig_n=\sum_{m=1}^nX_m$ for $n\geq 1$ and $\Sig_0=0$. 
The Erd\" os-R\' enyi law of large
 numbers from \cite{ER} says that with probability one
\begin{equation}\label{1.1}
I(\al)\lim_{n\to\infty}\max_{0\leq m\leq n-[\frac {\ln n}{I(\al)}]}\frac
{\Sig_{m+[\frac {\ln n}{I(\al)}]}-\Sig_m}{\ln n}=\al
\end{equation}
for all $\al>0$ such that $I(\al)<\infty$.

The nonconventional limit theorems initiated in \cite{Ki4} and partially 
motivated by nonconventional ergodic theorems (with the name coming from 
\cite{Fu}) study asymptotic behaviors of sums of the form
\begin{equation}\label{1.2}
\Sig_n=\sum_{m=1}^nF(X_m,X_{2m},...,X_{\ell m})
\end{equation}
(and more general ones) where $F$ is a vector function satisfying certain 
conditions. The main features of such sums are nonstationarity and unboundedly
long (and strong) dependence of their summands. In \cite{Ki5} we established
(\ref{1.1}) for sums (\ref{1.2}) where $F$ is a bounded Borel function and
$X_1,X_2,...$ are independent identically distributed random variables. One of
the main reasons for the independence assumption in \cite{Ki5} was the use of
large deviations for nonconventional sums (\ref{1.2}) which was established in 
\cite{KV2} only for sums (\ref{1.2}) with i.i.d. random variables $X_1,X_2,...$.

In this paper we modify our method so that only the standard (conventional)
large deviations are used for sums of the form
\begin{equation}\label{1.3}
T_n=\sum_{m=1}^nF(X_m^{(1)},X_{2m}^{(2)},...,X^{(\ell)}_{\ell m})
\end{equation}
where $\{ X^{(i)}_m,\, m\geq 1\}$, $i=1,2,...,\ell$ are independent copies
of the sequence $\{ X_m,\, m\geq 1\}$. Now, when $X_1,X_2,...$ is a stationary
weakly dependent sequence then the latter sum consists of stationary weakly
dependent summands with similar properties which allows applications to
Markov chains satisfying the Doeblin condition and to some dynamical systems
such as Axiom A diffeomorphisms, expanding transformations and topologically
mixing subshifts of finite type (see \cite{Bow}). We assume exponentially fast
$\psi$-mixing of the sequence $X_1,X_2,...$ which still leads to long and 
strongly dependent summands of $\Sig_n$ but once we justify a transition to the
sums $T_n$ we arrive at exponentially fast $\psi$-mixing summands there.
Observe that the Erd\H os-R\' enyi law for conventional ($\ell=1$) sums of
exponentially fast $\psi$-mixing random variables was obtained in \cite{DK}.

In fact, we derive a functional form of the Erd\H os-R\' enyi
law for nonconventional sums (\ref{1.2}) which was first introduced for
(conventional) sums of i.i.d. random vectors in \cite{Bor} and it was never
considered before beyond this setup. This is a more
general result and as a corollary we derive from it the standard form of
the Erd\H os-R\' enyi law for nonconventional sums. Moreover, unlike the
original form of this law its functional form allows to consider a 
multidimensional version where $X_1,X_2,...$ are random vectors and $F$ is a
vector function.

The structure of this paper is as follows. In Section \ref{sec2} we describe
precisely our setup and results. In Section \ref{sec3} we exhibit a lemma
which is a version of Lemma 3.1 from \cite{Ki3} and which plays a crucial 
role here. In Sections \ref{sec4} and \ref{sec5} we derive the corresponding
upper and lower bounds which yield the functional form of the Erd\H os-R\' enyi 
law for nonconventional sums. After that we show how this implies the standard
form of this law. In Appendix we describe applications to Markov chains and
dynamical systems and then discuss some properties of rate functions of large
deviations which are relevant to our proofs but hard to find in most of the 
books on large deviations.

\section{Preliminaries and main results}\label{sec2}\setcounter{equation}{0}

Let $X_1,X_2,...$ be a $\wp$-dimensional stationary vector stochastic
process on a probability space $(\Om,\cF,P)$ and let $F:\bbR^{\wp\ell}\to
\bbR^d$ be a bounded Borel vector function on $\bbR^{\wp\ell}$.  
 Our setup includes also a sequence
 $\cF_{m,n}\subset\cF,\,-\infty\le m\le n\leq\infty$ of $\sig$-algebras such
 that $\cF_{m,n}\subset\cF_{m_1,n_1}$ whenever $m_1\le m$ and $n_1\ge n$ which 
 satisfies an exponentially fast $\psi$-mixing condition (see, for instance,
 \cite{Bra}),
 \begin{eqnarray}\label{2.1}
 &\psi(n)=\sup\big\{\big\vert\frac {P(A\cap B)}{P(A)P(B)}-1\big\vert :\, A\in
 \cF_{-\infty,k},\, B\in\cF_{k+n,\infty},\\
 & P(A)P(B)\ne 0\big\}\leq\ka_1^{-1}e^{-\ka_1n}\nonumber
 \end{eqnarray}
 for some $\ka_1>0$ and all $k,n\geq 0$. 
 
 We assume also the centering condition
 \begin{equation}\label{2.2}
 \bar F=\int F(x_1,x_2,...,x_\ell)d\mu(x_1)...d\mu(x_\ell)=0,
 \end{equation}
 where $\mu$ is the distribution of $X_1$, which is not actually a restriction
  since we always can take $F-\bar F$ in place of $F$. In addition, we assume
  that either $X_n$ is $\cF_{n-m,n+m}$-measurable for some $m\in\bbN$ 
  independent of $n$ and then $F$ is supposed to be only Borel measurable
  and bounded or $F$ is supposed to be bounded and H\" older continuous 
  \begin{equation}\label{2.3}
  \| F\|_\infty=D<\infty,\,\, |F(x_1,...,x_\ell)-F(y_1,...,y_\ell)|\leq
  \ka_2^{-1}\sum_{i=1}^\ell|x_i-y_i|^{\ka_2},\,\ka_2>0
  \end{equation}
  and then we need only the following approximation property
  \begin{equation}\label{2.3+}
  E|X_n-X_{n,m}|^{\ka_2}\leq\ka_3^{-1}e^{-\ka_3m},\,\, X_{n,m}=E(X_n|
  \cF_{n-m,n+m})
  \end{equation}
  for all $n,m\in\bbN$ and some $\ka_3>0$ independent of $n$ and $m$.
  
  Define two sums
\[
\Sig_n=\sum_{1\leq k\leq n}F(X_k,X_{2k},...,X_{\ell k}),\,\,\Sig_0=0
\]
and
\[
T_n=\sum_{1\leq k\leq n} F(X_k^{(1)},X_{2k}^{(2)},...,X_{\ell k}^{(\ell)}),
\,\,T_0=0 
\]
where $\{ X_k^{(i)},\, k\geq 1\},\, i=1,2,...,\ell$ are independent copies
(in the sense of distributions) of the stationary process $\{ X_k,\, k\geq 1\}$.
Assume that for any piece-wise constant map $\gam:\,[0,1]\to\bbR^d$ the
limit 
\begin{equation}\label{2.4}
\lim_{n\to\infty}\frac 1n\ln E\exp\big(n\int_0^1(\gam_u,F(X_{[un]}^{(1)},
X_{2[un]}^{(2)},...,X_{\ell[un]}^{(\ell)}))du\big)=\int_0^1\Pi(\gam_u)du
\end{equation}
exists, $\Pi(\al),\,\al\in\bbR^d$ is a convex twice differentiable function
such that $\nabla_\al\Pi(\al)|_{\al=0}=0$ and the Hessian matrix 
$\nabla^2_\al\Pi(\al)|_{\al=0}$ is positively definite (where $(\cdot,\cdot)$
denotes the inner product). 

Let
\begin{equation}\label{2.5}
I(\be)=\sup_\al((\al,\be)-\Pi(\al))
\end{equation}
and for any $\gam:[0,1]\to\bbR^d$ from the space $C([0,1],\bbR^d)$ of 
continuous curves on $\bbR^d$ set
\begin{equation}\label{2.6}
S(\gam)=\int_0^1I(\dot\gam_u)du
\end{equation}
if $\gam$ is absolutely continuous and $S(\gam)=\infty$, for otherwise.
It follows from the existence and properties of the limit (\ref{2.4}) (see,
for instance, Section 7.4 in \cite{FW}) that $n^{-1}T_n$ satisfies large 
deviations estimates in the form that for any $a,\del,\la>0$ and every
$\gam\in C([0,1],\bbR^d),\,\gam_0=0$ there exists $n_0>0$ such that for
$n\geq n_0$,
\begin{eqnarray}\label{2.7}
& P\{\rho(n^{-1}T_n,\,\gam)<\del\}\geq\exp(-n(S(\gam)+\la))\,\,\mbox{and}\\
& P\{\rho(n^{-1}T_n,\,\Phi(a))\geq\del\}\leq\exp(-n(a-\la))\nonumber
\end{eqnarray}
where $\rho(\gam,\eta)=\sup_{u\in[0,1]}|\gam_u-\eta_u|$ and $\Phi(a)=
\{\gam\in C([0,1],\bbR^d):\,\gam_0=0,\, S(\gam)\leq a\}$.

Since $S$ is a lower semi-continuous functional then each $\Phi(a),\, a<\infty$
 is a closed set and, moreover, it is compact for any finite $a$. Indeed, 
 $|\Pi(\al)|\leq D|\al|$ by (\ref{2.3}) which implies
 by (\ref{2.5}) that $I_t(\be)=\infty$ provided $|\be|>D$ (take 
 $\al=a\be/|\be|$ in (\ref{2.5}) and let $a\to\infty$). Hence, $|\dot\gam_s|
 \leq D$ for Lebesgue almost all $s\in[0,1]$ if $\gam\in\Phi_t(a)$, and
 so the latter set is bounded and equicontinuous which by the Arzel\` a-Ascoli
 theorem implies its compactness.

For each $c>0,\, u\in[0,1]$ and integers $m\geq 0$ and $n\geq 2$ set
\[
V^c_{m,n}(u)=\frac {\Sig_{m+b_c(n,u)}-\Sig_m}{b_c(n)}
\]
where $b_c(n,u)=[ub_c(n)]$ and $b_c(n)=[c\ln n]$. Introduce also the set of
random curves $W^c_n=\{ V^c_{j,n}:\, 0\leq j\leq n-b_c(n)\}$. The following 
is the main result of this paper.

\begin{theorem}\label{thm2.1}
Assume that the conditions (\ref{2.1})--(\ref{2.4}) hold true. Then, for any 
$c>0$ with probability one
\begin{equation}\label{2.8}
\lim_{n\to\infty}H(W^c_n,\,\Phi(1/c))=0
\end{equation}
where $H(\Gam_1,\Gam_2)=\inf\{\del>0:\,\Gam_1\subset\Gam_2^\del,\,
\Gam_2\subset\Gam_1^\del\}$ is the Hausdorff distance between sets of curves
with respect to the uniform metric $\rho$ and $\Gam^\del=\{\gam:\,\rho(\gam,
\Gam)<\del\}$.
\end{theorem}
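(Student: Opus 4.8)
The plan is to prove the two set-inclusions underlying the Hausdorff convergence (\ref{2.8}) separately: with probability one, for every $\del>0$ and all large $n$ one has both $W^c_n\subset\Phi(1/c)^\del$ (the upper bound, ruling out excursions that are too large) and $\Phi(1/c)\subset(W^c_n)^\del$ (the lower bound, asserting that every admissible curve is eventually realized by some window); intersecting over a sequence $\del\downarrow 0$ then gives $\lim_n H(W^c_n,\Phi(1/c))=0$ almost surely. Both inclusions are reduced, by means of the Section~\ref{sec3} lemma (the version of Lemma 3.1 of \cite{Ki3}) together with the exponentially fast $\psi$-mixing (\ref{2.1}) and the approximation property (\ref{2.3+}), to the conventional large deviation estimates (\ref{2.7}) for the independent-copy sums $T_n$; the key point is that the increment $\Sig_{j+b_c(n)}-\Sig_j$ of the nonconventional sum and the independent-copy sum $T_{b_c(n)}$ share the same large deviation asymptotics. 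I would first record two soft facts about the rate functional $S$ that follow from the compactness of the level sets $\Phi(a)$ (already established above) and the lower semicontinuity of $S$: (i) for each $\del>0$ there is $h(\del)>0$ with $\inf\{S(\gam):\gam_0=0,\ \rho(\gam,\Phi(1/c))\ge\del\}\ge 1/c+h(\del)$, proved by a standard compactness-plus-lower-semicontinuity contradiction argument; and (ii) every $\gam\in\Phi(1/c)$ can be approximated within $\del/2$ by a curve of strictly smaller cost, namely $\tilde\gam=(1-s)\gam$, for which $\rho(\tilde\gam,\gam)\le sD$ and, by convexity of $I$ together with $I(0)=0$, $S(\tilde\gam)\le(1-s)S(\gam)\le 1/c-s/c$.

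For the upper bound I would estimate, using (i) and the upper half of (\ref{2.7}) transported from $T_{b_c(n)}$ to the increment via the Section~\ref{sec3} lemma, the probability that a single window lies far from $\Phi(1/c)$:
\[
P\{\rho(V^c_{j,n},\Phi(1/c))\ge\del\}\le\exp\big(-b_c(n)(1/c+h(\del)/2)\big)\approx n^{-(1+ch(\del)/2)}.
\]
A union bound over the at most $n$ admissible indices $j$ then gives a probability $\lesssim n^{-ch(\del)/2}$ that some window is bad. Since this need not be summable in $n$, I would run the Borel--Cantelli argument along a geometric subsequence $n_r=[\te^r]$, $\te>1$, where the bounds form a convergent geometric series, and then fill in the intermediate values of $n$ by a monotonicity/interpolation estimate controlling how much the windows and the window length $b_c(n)$ can change between $n_r$ and $n_{r+1}$.

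For the lower bound I would first cover the compact set $\Phi(1/c)$ by finitely many balls of radius $\del/2$ centred at curves $\gam_1,\dots,\gam_N\in\Phi(1/c)$, and replace each $\gam_i$ by the strictly cheaper curve $\tilde\gam_i$ from (ii), so that $S(\tilde\gam_i)\le 1/c-\eta$ for some $\eta>0$. Restricting attention to the disjoint windows $j=0,b_c(n),2b_c(n),\dots$, which the $\psi$-mixing renders asymptotically independent, and applying the lower half of (\ref{2.7}) (again through the Section~\ref{sec3} lemma), each such window is within $\del/2$ of $\tilde\gam_i$ with probability at least $\exp(-b_c(n)(1/c-\eta+\la))\approx n^{-1+c(\eta-\la)}$ for small $\la<\eta$. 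Hence the probability that all $\approx n/(c\ln n)$ disjoint windows miss $\tilde\gam_i$ is at most $\exp\big(-n^{c(\eta-\la)}/(c\ln n)\big)$, which decays faster than any power of $n$ and is therefore summable; Borel--Cantelli over the $N$ centres then yields $\Phi(1/c)\subset(W^c_n)^\del$ eventually, almost surely.

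The main obstacle is the reduction from $\Sig$ to $T$ underlying every probability estimate above: the summands $F(X_k,X_{2k},\dots,X_{\ell k})$ are long-range and strongly dependent because the index blocks at different scales $\{i(j+1),\dots,i(j+b_c(n))\}$, $i=1,\dots,\ell$, overlap for windows with small starting index $j$, so the clean independent-scale structure defining $\Pi$ in (\ref{2.4}) holds only approximately. Controlling this decoupling --- quantifying the error in replacing the increment's moment generating function by that of $T_{b_c(n)}$ uniformly over the exponentially many windows, treating separately the few windows whose scales genuinely overlap, and simultaneously justifying the near-independence of distinct disjoint windows needed for the lower bound --- is precisely what the Section~\ref{sec3} lemma and the mixing hypotheses (\ref{2.1}), (\ref{2.3+}) are designed to deliver, and is the technical heart of the argument.
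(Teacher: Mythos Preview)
Your overall architecture---split into upper and lower inclusions, reduce each window to the independent-copy sum $T_{b_c(n)}$ via the Section~\ref{sec3} lemma, then invoke the large deviation bounds (\ref{2.7}), and finish with Borel--Cantelli along a subsequence (upper) and a finite $\ve$-net in the compact $\Phi(1/c)$ (lower)---coincides with the paper's. The two ``soft facts'' you record are exactly what the paper uses (upper semicontinuity of $a\mapsto\Phi(a)$ for the upper bound, and the strict decrease $S((1-s)\gam)<S(\gam)$ for the lower bound). However, two steps in your sketch do not go through as written.

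\textbf{Upper bound: the overlapping windows cannot simply be ``treated separately''.} For a window starting at $j$, the blocks $\{i(j+1),\dots,i(j+b_c(n))\}$, $i=1,\dots,\ell$, are pairwise separated (so that Lemma~\ref{lem3.1} applies with a useful gap) only when $j\gtrsim\ell b_c(n)$; for $\ve b_c(n)\le j<6\ell b_c(n)$ the gap in (\ref{4.7}) is negative and the lemma says nothing. You cannot discard these windows by the deterministic shift bound $|V^c_{j,n}-V^c_{j+k,n}|\le 2kD/b_c(n)$, because that only lets you move $j$ by $o(b_c(n))$, whereas here you must move it by a fixed multiple of $b_c(n)$. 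The paper's device is to split each such window into $M\asymp\ell/\ve$ equal subintervals, apply Lemma~\ref{lem3.1} piecewise (where the gaps are now of order $\ve b_c(n)$, cf.\ (\ref{4.11+})), and then recombine using the \emph{convexity} of the level set $\Phi(1/c+\del)$ via
\[
\rho\Big(\hat V^c_{m,n},\Phi(\tfrac1c+\del)\Big)\le\frac1M\sum_{k=1}^M\rho\Big(\hat V^{c,M}_{m,n,k},\Phi(\tfrac1c+\del)\Big).
\]
This subdivision-plus-convexity step is the missing idea in your upper bound.

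\textbf{Lower bound: adjacent windows are not ``asymptotically independent''.} Your windows $j=0,b_c(n),2b_c(n),\dots$ share variables across scales (e.g.\ the scale-$1$ block of window $2$ coincides with the scale-$2$ block of window $1$), and even after passing to $T_n$ the gap in the $m$-index between consecutive windows is $O(1)$. The $\psi$-mixing correction in the product bound is then $(1+\psi(O(1)))^{n/\ln n}$, which grows like $e^{cn/\ln n}$ and swamps the main term $(1-n^{-1+c\sig})^{n/\ln n}$. The paper avoids this in two moves: it first restricts to windows with $j\in[(1-1/4\ell)n,\,n-b_c(n)]$, so that $j$ is of order $n$ and the within-window scale separation is automatic; it then thins to spacing $[4\ln^2 n]$ and proves a product-$\psi$-mixing inequality (\ref{5.8}) for the process $(X^{(1)}_m,\dots,X^{(\ell)}_{\ell m})$, so that the correction factor is $(1+\Psi([\ln^2n]))^{n/[4\ln^2n]}\to1$. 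With only $\sim n/(\ell\ln^2n)$ windows the miss-probability product is still $\exp(-n^{c\sig}/\ln^2n)$, which is summable. Your sketch needs both the restriction to large $j$ and a spacing growing faster than $\ln n$ to close.
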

\begin{corollary}\label{cor2.2} Let $d=1$. Then for $c=\frac 1{I(\be)}$ with
probability one
\begin{equation}\label{2.9}
\lim_{n\to\infty}\max_{0\leq k\leq n-b_c(n)}V^c_{k,n}(1)=\be
\end{equation}
provided $0<\be<\be_0=\sup\{\be:\, I(\be)<\infty\}$.
\end{corollary}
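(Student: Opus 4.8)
The plan is to deduce the corollary from Theorem~\ref{thm2.1} by applying the continuous endpoint functional $J(\gam)=\gam_1$ to the Hausdorff convergence $H(W_n^c,\Phi(1/c))\to 0$. First I would note that
\[
\max_{0\leq k\leq n-b_c(n)}V^c_{k,n}(1)=\sup_{\gam\in W_n^c}J(\gam),
\]
since $V^c_{k,n}(1)=(\Sig_{k+b_c(n)}-\Sig_k)/b_c(n)$ is exactly the endpoint value of the curve $V^c_{k,n}\in W_n^c$, and $W_n^c$ is a finite family so the supremum is attained. The functional $J$ is $1$-Lipschitz for $\rho$, i.e.\ $|J(\gam)-J(\eta)|=|\gam_1-\eta_1|\leq\rho(\gam,\eta)$, which is what allows the endpoint statistic to be read off from a statement about sets of curves.

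The key deterministic computation is to identify $\sup_{\gam\in\Phi(1/c)}J(\gam)$ when $c=1/I(\be)$, i.e.\ to solve the variational problem $\max\{\gam_1:\gam_0=0,\,S(\gam)\leq I(\be)\}$. For the upper bound, every $\gam\in\Phi(1/c)$ is absolutely continuous (as $S(\gam)<\infty$), so Jensen's inequality for the convex rate function $I$ gives
\[
S(\gam)=\int_0^1 I(\dot\gam_u)\,du\geq I\Big(\int_0^1\dot\gam_u\,du\Big)=I(\gam_1-\gam_0)=I(\gam_1),
\]
whence $I(\gam_1)\leq I(\be)$. Here I use the properties of $I$ coming from (\ref{2.4}): since $\Pi(0)=0$ and $\nabla\Pi(0)=0$ one has $I(0)=0=\min I$, and because $\nabla^2\Pi(0)$ is positive definite, $I(\be)>0$ for small $\be>0$; convexity then forces $I$ to be strictly increasing on $[0,\be_0)$. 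Consequently $I(\gam_1)\leq I(\be)$ with $\be>0$ yields $\gam_1\leq\be$, so $\sup_{\Phi(1/c)}J\leq\be$. For the matching lower bound the linear path $\gam^*_u=\be u$ is admissible, since $\gam^*_0=0$ and $S(\gam^*)=\int_0^1 I(\be)\,du=I(\be)=1/c$, and it attains $J(\gam^*)=\be$. Hence $\sup_{\gam\in\Phi(1/c)}J(\gam)=\be$; the same properties of $I$ show that $I(\be)\in(0,\infty)$ for $0<\be<\be_0$, so $c$ is a well-defined positive number and Theorem~\ref{thm2.1} applies.

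It then remains to transfer this through the Hausdorff convergence. Fixing $\del>0$, I would work on the full-measure event on which $H(W_n^c,\Phi(1/c))<\del$ for all large $n$, provided by Theorem~\ref{thm2.1}. The inclusion $W_n^c\subset\Phi(1/c)^\del$ shows each $\gam\in W_n^c$ lies within $\del$ of some $\eta\in\Phi(1/c)$, so by the Lipschitz bound $J(\gam)\leq J(\eta)+\del\leq\be+\del$, giving $\sup_{W_n^c}J\leq\be+\del$. Conversely, the inclusion $\Phi(1/c)\subset (W_n^c)^\del$ applied to the extremal curve $\gam^*$ furnishes some $\gam\in W_n^c$ with $\rho(\gam,\gam^*)<\del$, so $\sup_{W_n^c}J\geq J(\gam)\geq J(\gam^*)-\del=\be-\del$. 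Thus $|\max_{0\leq k\leq n-b_c(n)}V^c_{k,n}(1)-\be|\leq\del$ for all large $n$, and letting $\del\to 0$ gives (\ref{2.9}) almost surely. The only genuinely delicate point is the strict monotonicity of $I$ on $[0,\be_0)$, which I expect to justify from the positive-definiteness of $\nabla^2\Pi(0)$ (ruling out a flat piece of $I$ emanating from its minimum at $0$) rather than from strict convexity of $I$ itself; the Lipschitz transfer and the Jensen bound are routine.
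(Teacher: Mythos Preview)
Your proposal is correct and follows essentially the same route as the paper's proof: both apply the endpoint functional $J(\gam)=\gam_1$, identify $\sup_{\gam\in\Phi(1/c)}\gam_1=\be$ via Jensen's inequality plus monotonicity of $I$ together with the extremal linear path $\gam_u=\be u$, and then read off (\ref{2.9}) from the Hausdorff convergence in Theorem~\ref{thm2.1}. The paper compresses your explicit Lipschitz transfer into the single assertion (\ref{6.1}) and defers the monotonicity of $I$ to the Appendix (see (\ref{7.4}) and (\ref{7.6})), but the content is the same.
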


Observe that $(X_k^{(1)},X_{2k}^{(2)},...,X^{(\ell)}_{\ell k}),\, k\geq 1$ is
an $\ell\wp$-dimensional stationary process with properties similar to the
ones of the process $X_k,\, k\geq 1$, and so unlike $\Sig_n$ the sum $T_n$
requires only "conventional" treatment. Our main goal here will be to show how
to replace in our proofs the handling of the sums $\Sig_n$ by the sums $T_n$.
We will mainly discuss the proof for the case where $F$ and $X_n,\, n\geq 1$
satisfy the conditions (\ref{2.3}) and (\ref{2.3+}) since the case when $X_n$
 is $\cF_{n-m,n+m}$-measurable and $F$ is only a bounded Borel function is
 established by an obvious simplification of the proof just by eliminating
 the steps connected to approximations of $X_n$ by corresponding conditional
 expectations $E(X_n|\cF_{n-m,n+m})$.
 
 Our method goes through also for more general sums $\Sig_n=\sum_{1\leq m\leq n}
 F(X_{q_1(m)},X_{q_2(m)},...,X_{q_\ell(m)})$ where $q_i(m)=im$ for $i\leq k\leq
 \ell$ and $q_j(m)$ for $j=k+1,...,\ell$ being nonlinear indexes as in 
 \cite{KV1}. For instance, we may take $q_j(m)=m^j$ for $j>k$. In this 
 situation, it turns out that we can replace such sums $\Sig_n$ by the sums
 $T_n=\sum_{1\leq m\leq n} F(X^{(1)}_{q_1(m)},X^{(2)}_{q_2(m)},...,
 X^{(\ell)}_{q_\ell(m)})$ where, again, $\{X^{(i)}_m,\, m\geq 1\}$, $i=1,...,
 \ell$ are independent processes, for $i=1,...,k$ they are copies of $X_m,\, 
 m\geq 1$ while all $X_m^{(i)},\, m\geq 1,i>k$ are i.i.d. and have the same 
 distribution as $X_1$. Then, again, $(X_{q_1(m)}^{(1)},X_{q_2(m)}^{(2)},...,
 X^{(\ell)}_{q_\ell(m)}),\, m\geq 1$ is an $\ell\wp$-dimensional stationary 
 process with properties similar to the ones of the process $X_m,\, m\geq 1$
 and we can deal with such sums $T_n$ in the same way as in this paper.
 
 Our conditions are satisfied when, for instance, $X_n,\, n\geq 1$ is a 
 $\wp$-dimensional Markov chain with transition
 probabilities $P(x,\Gam)$ satisfying the strong Doeblin type condition
 \[
 C^{-1}\leq\frac {P(x,dy)}{d\nu(y)}\leq C
 \]
 for some probability measure $\nu$ and a constant $C>0$ independent of $x$ and 
  $y$. Then $(X_n^{(1)},X_{2n}^{(2)},...,X_{\ell n}^{(\ell)})$ is an 
  $\ell\wp$-dimensional Markov chain satisfying similarly to $X_n$ both
  exponentially fast $\psi$-mixing and the necessary large deviations 
  estimates (see \cite{Bra}, \cite{FW} and \cite{Ki1}). Here we can take,
  for instance, the $\sig$-algebras $\cF_{m,n}$ generated by $X_m,X_{m+1},
  ...,X_m$ and then $F$ is supposed to be only bounded and Borel.
  
  On the dynamical systems side our conditions are satisfied, for instance,
  when $X_n=g\circ f^n$ where $g$ is a H\" older continuous function and
  $f$ is an Axiom A diffeomorphism on a hyperbolic set, expanding 
  transformation or a mixing subshift of finite type (see \cite{Bow}). 
  In this case
  $f:M\to M$ and $X_n(\om),\,\om\in M$ is a stationary sequence on the
  probability space $(M,\cF,P)$ where $M$ is the corresponding phase space,
  $\cF$ is a Borel $\sig$-algebra and $P$ is a Gibbs measure constructed by
  a H\" older continuous function. The function $F$ here should satisfy 
  (\ref{2.3}) and the $\sig$-algebras $\cF_{m,n}$ are generated by cylinder
  sets in the subshift case or by corresponding Markov partitions in the
  Axiom A and expanding case. The exponentially fast $\psi$-mixing for these
  transformations is obtained in \cite{Bow} and the required large deviations
   results can be found in \cite{Ki1} and \cite{Ki2} and the product system
   $(f,f^2,...,f^\ell)$, which plays the role of $\ell$ independent copies
    in sums $T_n$, has similar properties to the dynamical system $f^n$ itself.

  \section{Basic estimates}\label{sec3}\setcounter{equation}{0}

We start with the following result which is a corollary of Lemma 3.1
from \cite{Ki3}.
\begin{lemma}\label{lem3.1}
Let $Y_i$ be $\wp_i$-dimensional random vectors with a distribution $\mu_i,\,
i=1,...,k$ defined on the same probability space $(\Om,\cF,P)$ and such
that $Y_i$ is $\cF_{m_i,n_i}$-measurable where $n_{i-1}<m_i\leq n_i<m_{i+1},\,
i=1,...,k$, $n_0=-\infty,\, m_{k+1}=\infty$ and $\sig$-algebras $\cF_{n,m}$
satisfy the condition (\ref{2.1}). Then, for any bounded Borel function
$h=h(x_1,...,x_k)$ on $\bbR^{\wp_1+\wp_2+\cdots +\wp_k}$,
\begin{eqnarray}\label{3.1}
&|Eh(Y_1,Y_2,...,Y_k)-\int h(x_1,x_2,...,x_k)d\mu_1(x_1)d\mu_2(x_2)...
d\mu_k(x_k)|\\
&\leq\ka_1^{-1}\| h\|_\infty\sum_{i=2}^ke^{-\ka_1(m_i-n_{i-1})}\nonumber
\end{eqnarray}
where $\|\cdot\|$ is the $L^\infty$ norm. In other words, if $Y_1^{(1)},
Y_2^{(2)},...Y_k^{(k)}$ are independent copies of $Y_1,Y_2,...,Y_k$, 
respectively, then
\begin{equation}\label{3.2}
|Eh(Y_1,Y_2,...,Y_k)-Eh(Y_1^{(1)},Y_2^{(2)},...,Y_k^{(k)})|\leq\ka_1^{-1}
\| h\|_\infty\sum_{i=2}^ke^{-\ka_1(m_i-n_{i-1})}.
\end{equation}
Taking $h=\bbI_\Gam$ for a Borel set $\Gam\subset\bbR^{\wp_1+\wp_2+\cdots 
+\wp_k}$ (where $\bbI_\Gam (x)=1$ if $x\in\Gam$ and $\bbI_\Gam (x)=0$ for 
otherwise) it follows that
\begin{equation}\label{3.3}
|P\{ (Y_1,Y_2,...,Y_k)\in\Gam\}-P\{ (Y_1^{(1)},Y_2^{(2)},...,Y_k^{(k)})\in
\Gam\}|\leq\ka_1^{-1}\sum_{i=2}^ke^{-\ka_1(m_i-n_{i-1})}.
\end{equation}
\end{lemma}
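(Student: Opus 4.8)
The plan is to prove (3.1) by a telescoping decoupling argument in which the vectors $Y_1,\dots,Y_k$ are replaced one at a time by their marginal laws $\mu_i$, the cost of each replacement being controlled by the mixing gap $m_i-n_{i-1}$ through (2.1); the reformulations (3.2) and (3.3) will then follow at once. For $j=0,1,\dots,k$ set
\[
A_j=E\!\int h(Y_1,\dots,Y_j,x_{j+1},\dots,x_k)\,d\mu_{j+1}(x_{j+1})\cdots d\mu_k(x_k),
\]
so that $A_k=Eh(Y_1,\dots,Y_k)$ and $A_0=\int h\,d\mu_1\cdots d\mu_k$ are the two quantities whose difference (3.1) bounds, and $A_k-A_0=\sum_{j=1}^k(A_j-A_{j-1})$. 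Writing $g_j(y_1,\dots,y_j)=\int h(y_1,\dots,y_j,x_{j+1},\dots,x_k)\,d\mu_{j+1}\cdots d\mu_k$, which satisfies $\| g_j\|_\infty\le\| h\|_\infty$ because it is an average of $h$, we have $A_j=Eg_j(Y_1,\dots,Y_j)$ and $A_{j-1}=E\!\int g_j(Y_1,\dots,Y_{j-1},x_j)\,d\mu_j(x_j)$. The increment at $j=1$ vanishes, $A_1=A_0$, since $Y_1$ has law $\mu_1$, so only the indices $j=2,\dots,k$ contribute, matching the range of the sum in (3.1).

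The key estimate is a conditional form of (2.1). Fix $j\ge2$, write $\cG=\cF_{-\infty,n_{j-1}}$, and note that $Y_1,\dots,Y_{j-1}$ are $\cG$-measurable, each $Y_i$ with $i\le j-1$ being $\cF_{m_i,n_i}\subset\cG$-measurable, while $\{Y_j\in B\}\in\cF_{m_j,\infty}$ for every Borel $B$. For such $B$, condition (2.1) reads $|P(\{Y_j\in B\}\mid A)-P(Y_j\in B)|\le\psi(m_j-n_{j-1})P(Y_j\in B)$ for all $A\in\cG$ with $P(A)>0$; choosing $A$ to be each of the two $\cG$-measurable events $\{\pm[E(\bbI_{\{Y_j\in B\}}\mid\cG)-P(Y_j\in B)]>\psi(m_j-n_{j-1})P(Y_j\in B)\}$ forces both to be $P$-null, whence
\[
\big|E(\bbI_{\{Y_j\in B\}}\mid\cG)-P(Y_j\in B)\big|\le\psi(m_j-n_{j-1})\,P(Y_j\in B)\quad\text{a.s.}
\]
Since this holds for every $B$, the regular conditional law $P(Y_j\in\cdot\mid\cG)$ is almost surely absolutely continuous with respect to $\mu_j$ with Radon--Nikodym derivative in $[1-\psi(m_j-n_{j-1}),\,1+\psi(m_j-n_{j-1})]$.

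Consequently, freezing the $\cG$-measurable arguments $Y_1,\dots,Y_{j-1}$ and integrating $g_j$ against the difference between this conditional law and $\mu_j$ gives, almost surely,
\[
\Big|E\big(g_j(Y_1,\dots,Y_j)\mid\cG\big)-\int g_j(Y_1,\dots,Y_{j-1},x_j)\,d\mu_j(x_j)\Big|\le\psi(m_j-n_{j-1})\,\| g_j\|_\infty\le\psi(m_j-n_{j-1})\,\| h\|_\infty .
\]
Taking expectations (the second term inside being already $\cG$-measurable) bounds the increment by $|A_j-A_{j-1}|\le\ka_1^{-1}\| h\|_\infty e^{-\ka_1(m_j-n_{j-1})}$ using (2.1), and summing over $j=2,\dots,k$ yields (3.1). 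For (3.2), independence gives $Eh(Y_1^{(1)},\dots,Y_k^{(k)})=\int h\,d\mu_1\cdots d\mu_k$, so (3.2) merely rereads (3.1); and (3.3) is the special case $h=\bbI_\Gam$, for which $\| h\|_\infty=1$.

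The step I expect to require the most care is the passage from the ratio bound (2.1) to the almost sure conditional inequality and its interpretation as a density bound: this is precisely what allows a single block $Y_j$ to be decoupled from the entire past $(Y_1,\dots,Y_{j-1})$ even though $h$ does not factor as a product, and the fact that $g_j$ is an average of $h$ is what keeps the per-step constant equal to $\| h\|_\infty$, so that the errors accumulate only through the geometric series $\sum_j e^{-\ka_1(m_j-n_{j-1})}$ rather than degrading over the $k$ telescoping steps.
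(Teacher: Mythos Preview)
Your proof is correct and follows essentially the same approach as the paper's: both decouple the $Y_j$'s one at a time using the conditional bound $|E(g(Y_1,\dots,Y_j)\mid\cF_{-\infty,n_{j-1}})-\int g(Y_1,\dots,Y_{j-1},x_j)\,d\mu_j(x_j)|\le\psi(m_j-n_{j-1})\|g\|_\infty$, you via an explicit telescoping sum and the paper via the equivalent induction on $k$. The only substantive difference is that the paper cites this conditional bound as Lemma~3.1 of \cite{Ki3}, whereas you derive it directly from the definition~(\ref{2.1}) of $\psi$-mixing, making your argument self-contained.
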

\begin{proof} If $k=2$ then Lemma 3.1 from \cite{Ki3} gives that
\[
|E(h(Y_1,Y_2)|\cF_{m_1,n_1})-g(Y_1)|\leq\psi(m_2-n_1)\| h\|_\infty
\]
where $g(y)=Eh(y,Y_2)=\int h(y,z)d\mu_2(z)$. Taking the expectation we obtain 
(\ref{3.1}) for $k=2$. Now let (\ref{3.1}) holds true for all $k\leq j-1$ and 
any bounded Borel function of the corresponding number of arguments. In order
to derive (\ref{3.1}) for $k=j$ we consider $(Y_1,Y_2,...,Y_{j-1})$ as one
random vector and $Y_j$ as another . Then we obtain from Lemma 3.1 of 
\cite{Ki3} that
\[
|E(h(Y_1,Y_2,...,Y_j)|\cF_{m_1,n_{j-1}})-g(Y_1,Y_2,...,Y_{j-1})|\leq\psi
(m_j-n_{j-1})\| h\|_\infty
\]
where $g(y_1,y_2,...,y_{j-1})=Eh(y_1,y_2,...,y_{j-1},Y_j)$. Now, taking the
expectation and applying the induction hypothesis to $g$ we complete the
proof.
\end{proof}

In the case when $X_n$ is $\cF_{n-m,n+m}$-measurable for all $n$ and a fixed
$m$ then we will be able to use Lemma \ref{lem3.1} directly which will enable
us to replace summands $F(X_n,X_{2n},...,X_{\ell n})$ by $F(X_n^{(1)},
X_{2n}^{(2)},...,X_{\ell n}^{(\ell)})$. On the other hand, under (\ref{2.3})
and (\ref{2.3+}) we will have, first, to replace the original random vectors
$X_m$ by their approximations $X_{m,k}=E(X_m|\cF_{m-k,m+k})$ and then using
(\ref{2.3}) to estimate the error.

Namely, for $\bar k=(k_1,k_2,...,k_n)$ set
\[
R_{n,\bar k}=\sum_{1\leq j\leq n}F(X_{j,k_j},X_{2j,k_j},...,X_{\ell j,k_j}).
\]
Let $j(u)>0,\, u\in[0,1]$ be a non decreasing integer valued function. We will
use that by (\ref{2.3}) and (\ref{2.3+}),
\begin{eqnarray}\label{3.4}
&E\sup_{u\in[0,1]}|\Sig_{m+j(u)}-\Sig_m-(R_{m+j(u),\bar k}-R_{m,\bar k})|\\
&\leq\sum_{m+1\leq i\leq m+j(1)}E|F(X_i,X_{2i},...,X_{\ell i})
-F(X_{i,k_i},X_{2i,k_i},...,X_{\ell i,k_i})|\nonumber\\
&\leq\ka_2^{-1}\ka_3^{-1}\sum_{m+1\leq i\leq m+j(1)}e^{-\ka_3k_i}.
\nonumber\end{eqnarray}

We observe that Lemma \ref{lem3.1} applied to the summands of the form 
$F(X_{j,k_j},...,X_{\ell j,k_j})$ does not yield yet the summands of the
form $F(X_j^{(1)},...,X_{\ell j}^{(\ell)})$ but only the summands
$F(X_{j,k_j}^{(1)},...,X_{\ell j,k_j}^{(\ell)})$ where $X^{(i)}_{ij,k_j},\,
i=1,...,\ell$ are independent and have the same distributions as
$X_{ij,k_j},\, i=1,...,\ell$, respectively. Thus an additional argument 
together with another use of (\ref{2.3}) and (\ref{2.3+}) will be needed.

\section{The upper bound}\label{sec4}\setcounter{equation}{0}

We will show first that with probability one,
\begin{equation}\label{4.1}
\lim_{n\to\infty}\max_{0\leq j\leq n-b_c(n)}\rho(V^c_{j,n},\Phi(1/c))=0.
\end{equation}
This assertion means that with probability one all limit points as 
$n\to\infty$  of curves from $W_n^c$ belong to the compact set $\Phi(1/c)$.

Set $R_n=R_{n,\bar k}$ where $\bar k=(k_j,\, 1\leq j\leq n)$ with $k_j=
[j/3]$ and
\[
\hat V^c_{m,n}(u)=\frac {R_{m+b_c(n,u)}-R_m}{b_c(n)}.
\]
Then by (\ref{3.4}) and the Chebyshev inequality
\begin{eqnarray}\label{4.2}
&P\{\rho(V^c_{m,n},\,\hat V^c_{m,n})\geq\ve\}\leq\ve^{-1}b_c^{-1}(n)\ka_2^{-1}
\ka_3^{-1}\sum_{m+1\leq j<\infty}e^{-\ka_3[j/3]}\\
&\leq \ve^{-1}b_c^{-1}\ka_2^{-1}\ka_3^{-1}e^{-\ka_3(m-2)/3}(1-e^{-\frac 
13\ka_3})^{-1}.
\nonumber\end{eqnarray}

Observe that
\begin{equation}\label{4.3}
|\Sig_{m+b_c(n,u)}-\Sig_m-(\Sig_{m+k+b_c(n,u)}-\Sig_{m+k})|\leq 2kD\quad
\mbox{a.s.}
\end{equation}
where, recall, $D=\| F\|_\infty$. Hence, a.s.,
\begin{equation}\label{4.4}
0\leq\max_{0\leq j\leq n-b_c(n)}\rho(V^c_{j,n},\Phi(1/c))-\max_{\ve b_c(n)\leq
 j\leq n-b_c(n)}\rho(V^c_{j,n},\Phi(1/c))\leq 2\ve D
 \end{equation}
 and, similarly,
 \begin{equation}\label{4.4+}
0\leq\max_{0\leq j\leq n-b_c(n)}\rho(\hat V^c_{j,n},\Phi(1/c))-\max_{\ve b_c(n)
\leq j\leq n-b_c(n)}\rho(\hat V^c_{j,n},\Phi(1/c))\leq 2\ve D.
 \end{equation}
 By (\ref{4.2}) we also have that
 \begin{eqnarray}\label{4.5}
 &P\{\max_{\ve b_c(n)\leq j\leq n-b_c(n)}\rho(V^c_{j,n},\hat V^c_{j,n})
 \geq\ve\}\leq\sum_{\ve b_c(n)\leq j\leq n-b_c(n)}\\
 &P\{\rho(V^c_{j,n},\hat V^c_{j,n}) \geq\ve\}\leq (\ve b_c(n)\ka_2\ka_3
 (1-e^{-\ka_3/3})^2)^{-1}e^{2\ka_3/3} n^{-\ka_3\ve c/3}.\nonumber
 \end{eqnarray}
 
 Next, observe that the family of compacts $\{\Phi(a),\, a>0\}$ is upper
 semi continuous, i.e. for any $a>0$ and $\ve>0$ there exists $\del>0$ such
 that $\Phi(a+\del)\subset\Phi(a)^\ve$ where, as before, $G^\ve$ denotes
 the open $\ve$-neighborhood of a set $G$. Indeed, if $G_\ve=\Phi(a+\del)
 \setminus\Phi(a)^\ve\ne\emptyset$ for some $a,\ve>0$ and all $\del$ then
 the decreasing with $\del\downarrow 0$ compact sets $G_\del$ must have a
 common point $\gam_0\in\Phi(a)$ contradicting the fact that 
 $\gam_0\not\in\Phi(a)^\ve$. Now, choosing such $\del$ for $\ve$ and $a=1/c$
 we obtain that
 \begin{equation}\label{4.6}
 P\{\rho(\hat V^c_{j,n},\Phi(1/c))>3\ve\}\leq 
 P\{\rho(\hat V^c_{j,n},\Phi(1/c+\del))>2\ve\}.
 \end{equation}
 
 For each vector 
 \[
 \bar x(m)=(x_1^{(1)},...,x_m^{(1)};x_1^{(2)},...,x_m^{(2)};...;x_1^{(\ell)},
 ...,x_m^{(\ell)})\in\bbR^{m\ell\wp}
 \]
 define the curve $\gam(\bar x(m))$ in $\bbR^d$ by
 \[
 \gam_u(\bar x(m))=m^{-1}\sum_{1\leq j\leq [um]}F(x_j^{(1)},x_j^{(2)},...,
 x_j^{(\ell)}),\, u\in[0,1]
 \]
 where the sum over the empty set is considered to be zero.
 Introduce the Borel set
 \[
 \Gam=\{\bar x(b_c(n))\in\bbR^{b_c(n)\ell\wp}:\,\rho(\gam(\bar x(b_c(n)),\,
 \Phi(\frac 1c+\del))> 2\ve\}.
 \]
 
 Let the pairs $\{ (X^{(i)}_{ij},\, X^{(i)}_{ij,[j/3]}),\, j=1,2,...\},$
 $i=1,...,\ell$ be independent copies (in the sense of joint distributions) of
 pairs of processes $\{(X_{ij},\, X_{ij,[j/3]}),\, j=1,2,...\}$, 
 $i=1,...,\ell$, respectively, which can be constructed on a product space.
 We identify the processes $\{ X^{(i)}_{ij},\, j=1,2,...\},\, i=1,...,\ell$
 with the processes having the same notation in Section \ref{sec2} since they
 have the same joint distributions which is what only matters here. Set 
 \[
 \hat T_n=\sum_{j=1}^nF(X^{(1)}_{j,[j/3]},X^{(2)}_{2j,[j/3]},...,
 X^{(\ell)}_{\ell j,[j/3]})
 \]
 and
 \[
 \hat U^c_{m,n}(u)=\frac {\hat T_{m+b_c(n,u)}-\hat T_m}{b_c(n)},\, u\in[0,1].
 \]
 
 Next, observe that
 \[
 \{ (X_{j,[j/3]},X_{2j,[j/3]},...,X_{\ell j,[j/3]})_{m+1\leq j\leq m+b_c(n)}
 \in\Gam\}=\{\rho(\hat V^c_{m,n},\Phi(\frac 1c+\del))>2\ve\}
 \]
 and 
 \[
 \{ (X^{(1)}_{j,[j/3]},X^{(2)}_{2j,[j/3]},...,X^{(\ell)}_{\ell j,
 [j/3]})_{m+1\leq j\leq m+b_c(n)}
 \in\Gam\}=\{\rho(\hat U^c_{m,n},\Phi(\frac 1c+\del))>2\ve\}.
 \]
 Taking in Lemma \ref{lem3.1},
 \[
 Y_i=(X_{ij,[j/3]},\, j=m+1,m+2,...,m+b_c(n)),\, i=1,2,...,\ell
 \]
 and observing that 
 \begin{equation}\label{4.7}
 i(m+1)-\frac 13(m+1)-(i-1)(m+b_c(n))-\frac 13(m+b_c(n))\geq\frac 16m,
 \end{equation}
 provided that $m\geq 6\ell b_c(n)$, we obtain from (\ref{3.3}) that for 
 $m\geq 6\ell b_c(n)$,
 \begin{equation}\label{4.8}
 |P\{\rho(\hat V^c_{m,n},\,\Phi(\frac 1c+\del))>2\ve\}-
 P\{\rho(\hat U^c_{m,n},\,\Phi(\frac 1c+\del))>2\ve\}|\leq\ka_1^{-1}\ell 
 e^{-\ka_1m/6}.
 \end{equation}
 
 Next, using the same notation for $T_n=\sum_{j=1}^nF(X^{(1)}_j,X^{(2)}_{2j},
 ...,X^{(\ell)}_{\ell j})$ as in Section \ref{sec2} with $X^{(1)}_j,
 X^{(2)}_{2j},...,X^{(\ell)}_{\ell j}$ introduced in this section we set
 \[
  U^c_{m,n}(u)=\frac { T_{m+b_c(n,u)}- T_m}{b_c(n)},\, u\in[0,1].
 \]
 Now, recall that each pair $(X^{(i)}_{ij},\, X^{(i)}_{ij,[j/3]})$ has the
 same joint distribution as $(X_{ij},\, X_{ij,[j/3]})$, and so by (\ref{2.3+}),
 \begin{equation}\label{4.8+}
 E|X^{(i)}_{ij}-X^{(i)}_{ij,[j/3]}|=E|X_{ij}-E(X_{ij}|\cF_{ij-[j/3],
 ij+[j/3]})|\leq\ka_3^{-1}e^{-\ka_3[j/3]}.
 \end{equation}
 Hence, similarly to (\ref{3.4}) and (\ref{4.2}) we conclude that
 \begin{equation}\label{4.8++}
 P\{\rho(U^c_{m,n},\hat U^c_{m,n})>\ve\}\leq (\ve b_c(n)\ka_2\ka_3
 (1-e^{-\ka_3/3}))^{-1}e^{-\ka_3(m-2)/3}.
 \end{equation}
 Now, repeating (\ref{4.4})--(\ref{4.5}) with $U^c$ in place of $V^c$ we 
 write also
 \begin{equation}\label{4.8b}
0\leq\max_{0\leq j\leq n-b_c(n)}\rho(U^c_{j,n},\Phi(1/c))-\max_{\ve b_c(n)\leq
 j\leq n-b_c(n)}\rho(U^c_{j,n},\Phi(1/c))\leq 2\ve D,
 \end{equation}
 \begin{equation}\label{4.8bb}
0\leq\max_{0\leq j\leq n-b_c(n)}\rho(\hat U^c_{j,n},\Phi(1/c))-\max_{\ve b_c(n)
\leq j\leq n-b_c(n)}\rho(\hat U^c_{j,n},\Phi(1/c))\leq 2\ve D,
 \end{equation}
 \begin{eqnarray}\label{4.8bbb}
 &P\{\max_{\ve b_c(n)\leq j\leq n-b_c(n)}\rho(U^c_{j,n},\hat U^c_{j,n})
 \geq\ve\}\leq\sum_{\ve b_c(n)\leq j\leq n-b_c(n)}\\
 &P\{\rho(U^c_{j,n},\hat U^c_{j,n}) \geq\ve\}\leq (\ve b_c(n)\ka_2\ka_3
 (1-e^{-\ka_3/3})^2)^{-1}e^{2\ka_3/3} n^{-\ka_3\ve c/3}.\nonumber
 \end{eqnarray}
 
 Taking into account stationarity of the sequence $F(X^{(1)}_m,X^{(2)}_{2m},
 ...,X^{(\ell)}_{\ell m}),\, m\geq 1$ we obtain from the upper large deviations 
 bound in (\ref{2.7}) that for any $\ve,\la>0$ there exists $n(\ve,\la)$ such
 that for all $n\geq n(\ve,\la)$,
 \begin{eqnarray}\label{4.9}
 &P\{\rho(U^c_{m,n},\Phi(\frac 1c+\del))\geq\ve\}=P\{\rho(b^{-1}_c(n)T_{b_c(n)},
 \Phi(\frac 1c+\del))\geq\ve\}\\
 &\leq e^{-(\frac 1c+\del-\la)b_c(n)}\leq e^{-\frac 1c}n^{-(1+c\sig)}
 \nonumber\end{eqnarray}
 where we choose $\la>0$ so small that $\sig=\del-\la>0$.
 
 Now, by (\ref{4.2}), (\ref{4.6}), (\ref{4.8}), (\ref{4.8+}) and (\ref{4.9})
 we obtain that for $m\geq 6\ell b_c(n)$,
 \begin{equation}\label{4.10}
 P\{\rho(V^c_{m,n},\Phi(1/c))>4\ve\}\leq d_1^{-1}(e^{-d_1m}+n^{-(1+c\sig)})
 \end{equation}
 for some $d_1=d_1(\ve)>0$ independent of $m$ and $n$ but dependent on
 $\ve>0$ which is fixed for now. Hence, we obtain from (\ref{4.2}), (\ref{4.4}),
 (\ref{4.4+}), (\ref{4.6}), (\ref{4.8b}), (\ref{4.8bb}) and (\ref{4.10}) that
 \begin{eqnarray}\label{4.11}
 &P\{\max_{0\leq m\leq n-b_c(n)}\rho(V^c_{m,n},\Phi(1/c))>8D+4\ve\}\\
 &\leq P\{\max_{\ve b_c(n)\leq m\leq n-b_c(n)}\rho(V^c_{m,n},\Phi(1/c))>4\ve\}
 \nonumber\\
 &\leq P\{\max_{\ve b_c(n)\leq m<6\ell b_c(n)}\rho(V^c_{m,n},\Phi(1/c))>4\ve\}
 \nonumber\\
 &+\sum_{6\ell b_c(n)\leq m\leq n-b_c(n)}P\{\rho(V^c_{m,n},\Phi(1/c))>4\ve\}
 \nonumber\\
 &\leq \sum_{\ve b_c(n)\leq m\leq 6\ell b_c(n)}P\{\rho(\hat V^c_{m,n},\Phi(1/c))
 >3\ve\}+d_2^{-1}n^{-d_2}\nonumber\\
 &\leq \sum_{\ve b_c(n)\leq m\leq 6\ell b_c(n)}P\{\rho(\hat V^c_{m,n},
 \Phi(1/c+\del))>2\ve\}+d_2^{-1}n^{-d_2}\nonumber
 \end{eqnarray}
 for some $d_2>0$ independent of $n$ but dependent on $\ve>0$.
 
 Next, we have to modify our approach for $\ve b_c(n)\leq m<6\ell b_c(n)$ in
 order to estimate the last sum in the right hand side of (\ref{4.11}). Fix
 an integer $M$ and define new random curves setting for $\frac {k-1}M\leq u
 <k/M$,
 \[
 \hat V^{c,M}_{m,n,k}(u)=M(\hat V^c_{m,n}(u)-\hat V^c_{m,n}(\frac {k-1}M)),\,
 k=1,...,M
 \]
 while $\hat V^{c,M}_{m,n,k}(u)=0$ if $u\not\in[\frac {k-1}M,\frac kM)$.
 We define also for $u\in[\frac {k-1}M,\frac kM)$,
 \[
 \hat U^{c,M}_{m,n,k}(u)=M(\hat U^c_{m,n}(u)-\hat U^c_{m,n}(\frac {k-1}M)),\,
 k=1,...,M
 \]
 with $\hat U^{c,M}_{m,n,k}(u)=0$ if $u\not\in[\frac {k-1}M,\frac kM)$. Observe
 that for $u\in[\frac {k-1}M,\frac kM)$,
 \[
 \hat V^{c,M}_{m,n,k}(u)=Mb_c^{-1}(n)\sum_{j=m+[(k-1)M^{-1}b_c(n)]+1}^{m+
 b_c(n,u)}F(X_{j,[j/3]},X_{2j,[j/3]},...,X_{\ell j,[j/3]})
 \]
 and
 \[
 \hat U^{c,M}_{m,n,k}(u)=Mb_c^{-1}(n)\sum_{j=m+[(k-1)M^{-1}b_c(n)]+1}^{m+
 b_c(n,u)}F(X^{(1)}_{j,[j/3]},X^{(2)}_{2j,[j/3]},...,X^{(\ell)}_{\ell j,
 [j/3]}).
 \]
 
 Taking in Lemma \ref{lem3.1},
 \[
 Y_i=(X_{ij,[j/3]},\, j=m+[(k-1)M^{-1}b_c(n)]+1,...,m+[kM^{-1}b_c(n)]),\,
 i=1,2,...,\ell
 \]
 and observing that 
 \begin{equation}\label{4.11+}
 (i-\frac 13)(m+(k-1)M^{-1}b_c(n))-(i-\frac 23)(m+kM^{-1}b_c(n))-\frac 13
 \geq (\frac \ve 3+\frac {k+1}{3M}-\frac \ell M)b_c(n),
 \end{equation}
 provided that $m\geq\ve b_c(n)$,
 we see that for $M=M(\ve)=6\ell([1/\ve]+1)$ the right hand 
 side of (\ref{4.11+}) is not less than $\frac 16\ve b_c(n)$. 
  Thus by (\ref{3.3}) for such $m$ in the same way as in
 (\ref{4.8}) we obtain that
 \begin{eqnarray}\label{4.11++}
 &|P\{\rho(\hat V^{c,M}_{m,n,k},\,\Phi(\frac 1c+\del))>2\ve\}-
 P\{\rho(\hat U^{c,M}_{m,n,k},\,\Phi(\frac 1c+\del))>2\ve\}|\\
 &\leq\ka_1^{-1}e^{\ka_1(1+\ve)/3}\ell e^{-\frac 16\ka_1
 \ve c\ln n}=\ka_1^{-1}e^{\ka_1(1+\ve)/3}\ell n^{-\ka_1\ve c/6}.\nonumber
 \end{eqnarray}
 Since there are no more than $6\ell c\ln n$ numbers $m$ for which we will 
 need this estimate it will suit our purposes.
 
 Next, define for $u\in[\frac {k-1}M,\frac kM)$ and $k=1,...,M$,
 \begin{eqnarray*}
  &U^{c,M}_{m,n,k}(u)=M(U^c_{m,n}(u)-U^c_{m,n}(\frac {k-1}M))\\
  &=Mb_c^{-1}(n)\sum_{j=m+[(k-1)M^{-1}b_c(n)]+1}^{m+
 b_c(n,u)}F(X^{(1)}_{j,},X^{(2)}_{2j},...,X^{(\ell)}_{\ell j}).
 \end{eqnarray*}
 Relying on (\ref{2.3}) and (\ref{4.8+}) we estimate $P\{\rho(U^{c,M}_{m,n,k},
 \hat U^{c,M}_{m,n,k})>\ve\}$ by the right hand side of (\ref{4.8++}) and
 using (\ref{4.11++}) we obtain
 \begin{equation}\label{4.11+++}
  P\{\rho(\hat V^{c,M}_{m,n,k},\,\Phi(\frac 1c+\del))>2\ve\}\leq
  P\{\rho(U^{c,M}_{m,n,k},\,\Phi(\frac 1c+\del))>2\ve\}+d^{-1}_3n^{-d_3}
  \end{equation}
  for some $d_3>0$ independent of $m,n,k$ but dependent on $\ve$.
  
  Next, using stationarity of the sequence $F(X^{(1)}_k,X^{(2)}_{2k},...,
  X_{\ell k}^{(\ell)}),\, k\geq 1$ we can compute the rate functional of
  large deviations for $U^{c,M}_{m,n,k}$ as $n\to\infty$ from 
  (\ref{2.4})--(\ref{2.6}) which will provide the upper bound similarly to
  (\ref{4.9}) in the form
  \begin{equation}\label{4.12}
 P\{\rho(U^{c,M}_{m,n,k},\Phi(\frac 1c+\del))\geq\ve\}
 \leq d_4^{-1}e^{-d_4\ln n}=d_4^{-1} n^{-d_4}
 \end{equation}
 for some $d_4>0$ depending on $\ve$ but not on $n$. Now observe that
 \[
  \hat V^{c}_{m,n}=\frac 1M\sum_{k=1}^M\hat V^{c,M}_{m,n,k}.
 \]
 Since $\Phi(\frac 1c+\del)$ is a convex set then
 \[
 \rho( \hat V^{c}_{m,n},\Phi(\frac 1c+\del))\leq\frac 1M
 \sum_{k=1}^M\rho(\hat V^{c,M}_{m,n,k},\Phi(\frac 1c+\del)).
 \]
 Hence,
 \begin{equation}\label{4.13}
 P\{\rho( \hat V^{c}_{m,n},\Phi(\frac 1c+\del))>2\ve\}\leq\frac 1M
 \sum_{k=1}^MP\{\rho(\hat V^{c,M}_{m,n,k},\Phi(\frac 1c+\del))>2\ve\}.
 \end{equation}
 
 Now, collecting the estimates (\ref{4.4})--(\ref{4.6}), (\ref{4.8}), 
 (\ref{4.9}) and (\ref{4.11})--(\ref{4.13}) we conclude that
 \begin{equation}\label{4.14}
 P\{\max_{0\leq j\leq n-b_c(n)}\rho(V^c_{j,n},\Phi(1/c))\geq (8D+4)\ve\}
 \leq d_5^{-1}n^{-d_5}
 \end{equation}
 for some $d_5>0$ depending on $\ve$ but not on $n$. Replacing $n$ by the
 subsequence $k_n=n^{2/d_5]}$ we obtain by the Borel-Cantelli lemma that with
 probability one,
 \begin{equation}\label{4.15}
 \limsup_{n\to\infty}\max_{0\leq j\leq k_n-b_c(k_n)}\rho(V^c_{j,k_n},\Phi(1/c))
 \leq (8D+4)\ve.
 \end{equation}
 Now take into account that if $k_n<r\leq k_{n+1}$ then
 \[
 b_c(k_{n+1})-b_c(r)\leq b_c(k_{n+1})-b_c(k_n)\leq c[2/d_4]\ln(\frac {n+1}n) +1
 \to 1\quad\mbox{as}\quad n\to\infty .
 \]
 It follows that (\ref{4.15}) remains true if $k_n$ there is replaced by $n$ 
 implying (\ref{4.1}) since $\ve>0$ is arbitrary.

  \section{The lower bound}\label{sec5}\setcounter{equation}{0}

We will prove here that with probability one
\begin{equation}\label{5.1}
\lim_{n\to\infty}\sup_{\gam\in\Phi(1/c)}\min_{0\leq j\leq n-b_c(n)}\rho
(V^c_{j,n},\gam)=0
\end{equation}
which will complete the proof of (\ref{2.8}). For $\gam\in\Phi(1/c)$ introduce
 the events
 \[
 \Gam_n^{(1)}=\Gam_n^{(1)}(\gam,\ve)=\{\min_{0\leq j\leq n-b_c(n)}
 \rho(V^c_{j,n},\gam)\geq 6\ve\}.
 \]
 Then
 \begin{equation}\label{5.2}
 \Gam_n^{(1)}\subset\Gam_n^{(2)}(\gam,\ve)=\{\min_{(1-1/4\ell)n\leq j\leq
 n-b_c(n)}\rho(V^c_{j,n},\gam)\geq 6\ve\}.
 \end{equation}
 Set
 \[
 \Gam^{(3)}_n(\gam,\ve)=\{\min_{(1-1/4\ell)n\leq j\leq
 n-b_c(n)}\rho(\hat V^c_{j,n},\gam)\geq 5\ve\}.
 \]
 Then, by (\ref{4.2}),
 \begin{equation}\label{5.3}
 P(\Gam^{(2)}_n(\gam,\ve))\leq P(\Gam^{(3)}(\gam,\ve))+d^{-1}_6e^{-d_6n}
 \end{equation}
 for some constant $d_6>0$ independent of $n$ and $\ve$.
 
 Next, we apply Lemma \ref{lem3.1} to random vectors
 \[
 Y_i=(X_{ij,[j/3]},,\ (1-\frac 1 {4\ell})n\leq j\leq n),\, i=1,...,\ell .
 \]
 Observe that
 \[
 (i-1/3)(1-1/4\ell)n-(i-2/3)n=n/3-(i-1/3)n/4\ell\geq n/12,
 \]
 and so similarly to Section \ref{sec4} we derive from (\ref{3.3}) that
 \begin{equation}\label{5.4}
 |P(\Gam_n^{(3)}(\gam,\ve))-P(\Gam_n^{(4)}(\gam,\ve))|\leq d_7^{-1}e^{-d_7n}
 \end{equation}
 where
 \[
 \Gam^{(4)}_n(\gam,\ve)=\{\min_{(1-1/4\ell)n\leq j\leq
 n-b_c(n)}\rho(\hat U^c_{j,n},\gam)\geq 5\ve\}
 \]
 and $d_7>0$ does not depend on $n$. Now,
 \begin{equation}\label{5.5}
 \Gam_n^{(4)}=\cap_{(1-1/4\ell)n\leq j\leq n-b_c(n)}\Gam_{n,j}^{(5)}\subset
 \cap_{j:(1-1/4\ell)n\leq j[4\ln^2n]\leq n-b_c(n)}\Gam_{n,j[4\ln^2n]}^{(5)}
 \end{equation}
 where
 \[
 \Gam^{(5)}_{n,j}(\gam,\ve)=\{\rho(\hat U^c_{j,n},\gam)\geq 5\ve\}.
 \]
 Next, we write
 \[
 \Gam^{(5)}_{n,j}(\gam,\ve)\subset\Gam^{(6)}_{n,j}(\gam,\ve)\cup\{\rho
 (U^c_{j,n},\hat U^c_{j,n})>\ve\}
 \]
 where $\Gam^{(6)}_{n,j}(\gam,\ve)=\{\rho(U^c_{j,n},\gam)\geq 4\ve\}$.
 Hence, by (\ref{4.8++}) and (\ref{5.2})--(\ref{5.5}),
 \begin{equation}\label{5.6}
 P(\Gam_n^{(1)}(\gam,\ve))\leq P\big(\cap_{j:\,(1-1/4\ell)n\leq j[4\ln^2n]\leq
 n-b_c(n)}\Gam^{(6)}_{n,j[4\ln^2n]}(\gam,\ve)\big)+d_8e^{-d_8n}
 \end{equation}
 for some $d_8>0$ independent of $n$ but depending on $\ve$.
 
 Next, we will use $\psi$-mixing and approximation properties of the product
 process $(X^{(1)}_j,X^{(2)}_{2j},...,X^{(\ell)}_{\ell j}),\, j=1,2,...$.
 Consider the product probability space $(\Om^\ell,\cF^\ell,P^\ell)=(\om,\cF,P)
 \times\cdots\times(\Om,\cF,P)$ ($\ell$-times product) and the $\sig$-algebras
 $\cF_{m_1,n_1;m_2,n_2;...;m_\ell,n_\ell}=\cF_{m_1,n_1}\times\cF_{m_2,n_2}
 \times\cdots\times\cF_{m_\ell,n_\ell}$ (where by this product we mean the
  minimal $\sig$-algebra containing all products of sets from the factors).
  Let
  \begin{equation}\label{5.7}
  \Gam\in\cF_{-\infty,m_1;-\infty,m_2;...;-\infty,m_\ell}\,\,\,\mbox{and}\,\,\,
  \Del\in\cF_{m_1+k_1,\infty;m_2+k_2,\infty;...;m_\ell+k_\ell,\infty}.
  \end{equation}
  We claim that
  \begin{equation}\label{5.8}
  |P^\ell(\Gam\cap\Del)-P^\ell(\Gam)P^\ell(\Del)|\leq P^\ell(\Gam)P^\ell(\Del)
  \sum_{i=1}^\ell\psi(k_i)\prod_{j=i+1}^\ell(1+\psi(k_j))
  \end{equation}
  where $\prod_{\ell+1}^\ell=1$ and $\psi$ is defined in (\ref{2.1}).
  
  Indeed, take first $\Gam=\Gam_1\times\Gam_2\times\cdots\times\Gam_\ell$
  and $\Del=\Del_1\times\Del_2\times\cdots\times\Del_\ell$ with $\Gam_i\in
  \cF_{-\infty,m_i}$  and $\Del_i\in\cF_{m_i+k_i,\infty},\, i=1,...,\ell$.
  We proceed by induction in $\ell$. For $\ell=1$ the inequality (\ref{5.8})
  follows from (\ref{2.1}). Now, suppose that (\ref{5.8}) holds true for
  $\ell-1$ in place of $\ell$. Then by (\ref{2.1}),
  \begin{eqnarray*}
  & |P^\ell(\Gam\cap\Del)-P^\ell(\Gam)P^\ell(\Del)|=|\prod_{i=1}^\ell P(\Gam_i
  \cap\Del_i)-\prod_{i=1}^\ell P(\Gam_i)P(\Del_i)|\\
  &\leq P(\Gam_\ell\cap\Del_\ell)||\prod_{i=1}^{\ell-1} P(\Gam_i
  \cap\Del_i)-\prod_{i=1}^{\ell-1}P(\Gam_i)P(\Del_i)|\\
  &+|P(\Gam_\ell\cap\Del_\ell)-P(\Gam_\ell)P(\Del_\ell)|\prod_{i=1}^{\ell-1}
  P(\Gam_i)P(\Del_i)\\
  &\leq (1+\psi(k_\ell))P(\Gam_\ell)P(\Del_\ell)|\prod_{i=1}^{\ell-1} P(\Gam_i
  \cap\Del_i)-\prod_{i=1}^{\ell-1}P(\Gam_i)P(\Del_i)|\\
  &+\psi(k_\ell)\prod_{i=1}^{\ell}P(\Gam_i)P(\Del_i)
  \end{eqnarray*}
  and we arrive at (\ref{5.8}) using the induction hypothesis taking into
  account that here
  \[
  P^\ell(\Gam)=\prod_{i=1}^\ell P(\Gam_i)\quad\mbox{and}\quad
  P^\ell(\Del)=\prod_{i=1}^\ell P(\Del_i).
  \]
  Since (\ref{5.8}) remains true under finite disjoint unions and under 
  monotone unions and intersections then by the monotone class theorem
  (\ref{5.8}) is valid for all $\Gam$ and $\Del$ satisfying (\ref{5.7}).
  
  Next, we redefine $X_j^{(i)}$ on the product space $\Om^\ell$ setting
  $X_j^{(i)}(\om,\om_2,...,\om_\ell)$ equal to $X_j^{(i)}(\om_i)$ and using
  the same notation for the new processes which are, again, independent for
  different $i$'s and have the same distributions as before. Using these
  $X^{(i)}_j$ we redefine on $\Om^\ell$ the sum $T_n$ and the random curves 
  $U^c_{m,n}(u),\, u\in[0,1]$, as before. Let $\cF_0=\{\emptyset,\Om\}$ be
  the trivial $\sig$-algebra on $\Om$ and set
  \[
  \cF^{(i)}=\cF_0\times\cdots\times\cF_0\times\cF\times\cF_0\times\cdots\times
  \cF_0\,\,\mbox{and}\,\,\cF^{(i)}=\cF_0\times\cdots\times\cF_0\times\cF_{m,n}
  \times\cF_0\times\cdots\times\cF_0
  \]
  where the nontrivial $\sig$-algebra appears as the $i$-th factor (and the
  product $\sig$-algebras are understood in the same sense as above). Set
  $X^{(i)}_{n,m}=E^\ell(X^{(i)}_n|\cF^{(i)}_{n-m,n+m})$ where $E^\ell$ is the
  expectation with respect to the probability $P^\ell$. Define
  \[
  \tilde T_n=\sum_{j=1}^nF(X^{(1)}_{j,[\ln^2n]},X^{(2)}_{2j,[\ln^2n]},...,
  X^{(\ell)}_{\ell j,[\ln^2n]})
  \]
  and
  \[
  \tilde U^c_{i,n}(u)=\frac {\tilde T_{j+b_c(n,u)}-\tilde T_j}{b_c(n)}.
  \]
  
  Taking into account that the pairs $(X^{(i)}_j,X^{(i)}_{j,[\ln^2n]})$
  have the same distributions as $(X_j,X_{j,[\ln^2n]})$ we derive from
  (\ref{2.3}) and (\ref{2.3+}) similarly to (\ref{4.8+}) and (\ref{4.8++})
  that
  \begin{equation}\label{5.9}
  P^\ell\{\rho(U^c_{j,n},\tilde U^c_{j,n})>\ve\}\leq d^{-1}_9\exp(-d_9\ln^2n)
  \end{equation}
  for some $d_9>0$ independent of $n$ but depending on $\ve$. Hence,
  \begin{eqnarray}\label{5.10}
  &P(\cap_{j:\, (1-1/4\ell)n\leq j[4\ln^2n]\leq n-b_c(n)}\Gam_{n,j[4\ln^2n]}
  ^{(6)}(\gam,\ve))\\
  &\leq P(\cap_{j:\, (1-1/4\ell)n\leq j[4\ln^2n]\leq n-b_c(n)}
  \Gam_{n,j[4\ln^2n]}^{(7)}
  (\gam,\ve))+d^{-1}_9n\exp(-d_9\ln^2n)\nonumber
  \end{eqnarray}
  where $\Gam^{(7)}_{n,j}(\gam,\ve)=\{\rho(\tilde U^c_{j,n},\gam)\geq 3\ve\}$.
  Now observe that $\Gam^{(7)}_{n,j[4\ln^2n]}(\gam,\ve)$ is $\cF_{k_1(j,n),
  m_1(j,n);k_2(j,n),m_2(j,n);...;k_\ell(j,n),m_\ell(j,n)}$-measurable, where
  $k_i(j,n)=ij[4\ln^2n]-[\ln^2n]$ and $m_i(j,n)=ij[4\ln^2n]+[\ln^2n]+b_c(n)$.
  Hence, $k_i(j+1,n)-m_i(j,n)\geq [\ln^2n]$ when $n$ is large enough. Applying
  (\ref{5.8}) successively we obtain that
  \begin{eqnarray}\label{5.11}
   &P(\cap_{j:\, (1-1/4\ell)n\leq j[4\ln^2n]\leq n-b_c(n)}\Gam_{n,j[4\ln^2n]}
   ^{(7)}(\gam,\ve))\\
  &\leq (1+\Psi([\ln^2n]))^{\frac n{[4\ln^2n]}}\prod_{j:\, 
  (1-1/4\ell)n\leq j[4\ln^2n]\leq n-b_c(n)}P^\ell(\Gam_{n,j[4\ln^2n]}
  ^{(7)}(\gam,\ve))\nonumber
  \end{eqnarray}
  where 
  \begin{equation}\label{5.12}
  \Psi(k)=\ell\psi(k)(1+\psi(k))^\ell\leq\ka_1^{-1}\ell 2^\ell e^{-\ka_1k}
  \end{equation}
  for all $k$ large enough.
  
  Next, we use (\ref{5.9}) again in order to obtain that
  \begin{equation}\label{5.13}
  P^\ell(\Gam^{(7)}_{n,j}(\gam,\ve))\leq P^\ell(\Gam^{(8)}_{n,j}(\gam,\ve))+
  d^{-1}_9\exp(-d_9\ln^2n)
  \end{equation}
  where $\Gam^{(8)}_{n,j}(\gam,\ve)=\{\rho(U^c_{j,n},\gam)\geq 
  2\ve\}$. Observe that we can replace $P^\ell$ by $P$ in the right hand side
  of (\ref{5.13}) if the independent processes $\{ X^{(i)}_{ij},\, j=1,2,...\},
  \, i=1,...,\ell$ are considered on the original probability space
  $(\Om,\cF,P)$.
  
  Since we consider $\gam\in\Phi(1/c)$ then $I(\dot\gam_u)<\infty$ for Lebesgue
  almost all $u\in[0,1]$ and (see Appendix) if $I(\be)<\infty,\,\be\in\bbR^d$
  then $I(a\be)<I(\be)$ for any $0<a<1$. Hence, if we define
  \[
  \eta_u=(1-\ve(\sup_{v\in[0,1]}|\gam_u|)^{-1})\gam_u,\, u\in[0,1]
  \]
  then
  \begin{equation}\label{5.14}
  \rho(\gam,\eta)\leq\ve\quad\mbox{and}\quad S(\eta)\leq S(\gam)-a\leq\frac 1c
  -a
  \end{equation}
  for some $a>0$. By (\ref{5.14}) and stationarity of the summands in $T_n$,
  \begin{eqnarray}\label{5.15}
  &P^\ell(\Gam^{(8)}_{n,j}(\gam,\ve))\leq P^\ell\{\rho(U^c_{j,n},\eta)\geq\ve\}
  \\
  &=P^\ell\{\rho(b^{-1}_c(n)T_{b_c(n)},\eta)\geq\ve\}=1-P^\ell\{\rho(U^c_{j,n},
  \eta)<\ve\}.\nonumber
  \end{eqnarray}
  Now, employing the lower large deviations bound from (\ref{2.7}) we obtain
  that for any $\ve,\la>0$ there exists $n_0>0$ such that for all $n\geq n_0$,
  \begin{equation}\label{5.16}
  P^\ell\{\rho(b^{-1}_c(n)T_{b_c(n)},\eta)<\ve\}\geq\exp(-b_c(n)(c^{-1}-a+\la))
  \geq n^{-1+c\sig}
  \end{equation}
  where we choose $\la>0$ so small that $\sig=a-\la>0$.
  
  Hence, we obtain by (\ref{5.6}), (\ref{5.10})--(\ref{5.13}), (\ref{5.15})
  and (\ref{5.16}) that for all $n$ large enough
  \begin{eqnarray}\label{5.17}
  &P(\Gam^{(1)}_n(\gam,\ve))\leq(1+\ka^{-1}\ell 2^\ell\exp(-\ka_1[\ln^2n]))^n
  (1-n^{-1+c\sig})^{\frac n{5\ell[4\ln^2n]}}\\
  &+d_8^{-1}e^{-d_8n}+2d^{-1}_9n\exp(-d_9\ln^2n)\leq d^{-1}_{10}
  \exp(-d_{10}\ln^2n)\nonumber
  \end{eqnarray}
  for some $d_{10}>0$ independent of $n$ but depending on $\ve$. Employing 
  the Borel-Cantelli lemma we conclude from (\ref{5.17}) that for any
  $\gam\in\Phi(1/c)$ with probability one
  \begin{equation}\label{5.18}
  \limsup_{n\to\infty}\min_{0\leq j\leq n-b_c(n)}\rho(V^c_{j,n},\gam)<6\ve.
  \end{equation}
  Since $\Phi(1/c)$ is a compact set we can choose there an $\ve$-net $\gam_1,
  \gam_2,...,\gam_{k(\ve)}$ and then with probability one (\ref{5.18}) will
  hold true simultaneously for all $\gam=\gam_i,\, i=1,...,k(\ve)$. It follows
  then that with probability one,
  \begin{equation}\label{5.19}
  \limsup_{n\to\infty}\sup_{\gam\in\Phi(1/c)}\min_{0\leq j\leq n-b_c(n)}\rho
  (V^c_{j,n},\gam)\leq 7\ve
  \end{equation}
  and since $\ve>0$ is arbitrary we obtain (\ref{5.1}).       
  \qed

 \section{Proof of Corollary \ref{cor2.2}}\label{sec6}
\setcounter{equation}{0}
Observe that (\ref{2.8}) implies, in particular, that for any continuous 
(with respect to the metric $\rho$) function $f$ on the space of curves 
$[0,1]\to\bbR^d$ with probability one,
\begin{equation}\label{6.1}
\lim_{n\to\infty}\max_{0\leq k\leq n-b_c(n)}f(V^c_{k,n})=
\sup_{\gam\in\Phi(1/c)}f(\gam).
\end{equation}
Now assume that $d=1$ and take $f(\gam)=\gam(1)$ where we write $\gam(u)=
\gam_u$. Assume that $I(\be)<\infty$ and set $c=1/I(\be)$. Then
\begin{equation}\label{6.2}
\sup_{\gam\in\Phi(I(\be))}f(\gam)=\sup\{\gam(1):\,\gam\in\Phi(I(\be))\}=\be.
\end{equation}

Indeed, by convexity of the rate function $I$ for any $\gam\in\Phi(I(\be))$,
\[
I(\be)\geq S(\gam)=\int_0^1I(\dot\gam(u))du\geq I(\int_0^1\dot\gam(u)du)
=I(\gam(1))
\]
and by monotonicity of $I$ (see Appendix), $\be\geq\gam(1)$. On the other
hand, take $\gam(u)=u\be$, $u\in[0,1]$. Then $S(\gam)=I(\be)$ and $\gam(1)=\be$
implying (\ref{6.2}) whenever $I(\be)<\infty$ and (\ref{2.9}) follows.  \qed

\section{Appendix}\label{sec7}\setcounter{equation}{0}
\subsection{Applications}\label{subsec7.1}
The main applications in the discrete time case of Theorem \ref{thm2.1}
concern Markov chains and some classes of dynamical systems such as Axiom
A diffeomorphisms, expanding transformations and topologically mixing 
subshifts of finite type. We will restrict ourselves to several main setups
to which our results are applicable rather than trying to describe most
general situations. First, let $X_n,\, n\geq 0$ be a time homogeneous
Markov chain on $\bbR^\wp$ whose transition probability
$P(x,\Gam)=P\{X_1\in\Gam|X_0=x\}$ satisfies
\begin{equation}\label{7.1}
\ka\nu(\Gam)\leq P(x,\Gam)\leq\ka^{-1}\nu(\Gam)
\end{equation}
for some $\ka>0$, a probability measure $\nu$ on $\bbR^\wp$ and any Borel set 
$\Gam\subset \bbR^\wp$. Then $X_n,\, n\geq 0$ is exponentially fast $\psi$-mixing
with respect to the family of $\sig$-algebras $\cF_{m,n}=\sig\{X_k,\,
m\leq k\leq n\}$ generated by the process (see, for instance, \cite{IL}).
The strong Doeblin type condition (\ref{7.1}) implies geometric ergodicity
\[
\| P(n,x,\cdot)-\mu\|\leq\be^{-1}e^{-\be n},\,\be>0
\]
where $\|\cdot\|$ is the variational norm, $P(n,x,\cdot)$ is the $n$-step 
transition probability and $\mu$ is the unique invariant measure of
$\{X_n,\, n\geq 0\}$ which makes it a stationary process. In this situation
$(X^{(1)}_n,X^{(2)}_{2n},...,X^{(\ell)}_{\ell n}),\, n\geq 0$ is the product 
Markov chain on $\bbR^{\ell\wp}$ satisfying similar to (\ref{7.1}) strong 
Doeblin condition. The limit (\ref{2.4}) exists here (see Lemma 4.3 in Ch.7 of
 \cite{FW}) and $\exp(\Pi(\al))$ turns out to be the principal eigenvalue
 of the positive operator 
 \[
 Gf(x)=E_xf(X_1^{(1)},X_2^{(2)},...,X_\ell^{(\ell)})\exp\big((\al,F(X_1^{(1)},
 X_2^{(2)},...,X_\ell^{(\ell)}))\big)
 \]
 (see \cite{Ki1} and references there) where $E_x$ is the expectation
 conditioned to $(X^{(1)}_0,X^{(2)}_0,...,X^{(\ell)}_0)=x$. It is well known 
 (see \cite{Na}, \cite{IL}, \cite{GH} and references there) that $\Pi(\al)$
 is convex and differentiable in $\al$. Furthermore, the Hessian matrix
 $\nabla_\al^2\Pi(\al)|_{\al=0}$ is positively definite if and only if for
 each $\al\in\bbR^d$, $\al\ne 0$ the limiting variance
 \begin{equation}\label{7.2}
 \sig_\al^2=\lim_{n\to\infty}n^{-1}E\big(\sum_{k=0}^n(\al,F(X^{(1)}_k,
 X^{(2)}_{2k},...,X^{(\ell)}_{\ell k}))\big)
 \end{equation}
 is positive. The latter holds true unless there exists a representation
 \[
 (\al,F(X^{(1)}_n,...,X^{(\ell)}_{\ell n}))=g(X^{(1)}_n,...,X^{(\ell)}_{\ell n})
 -g(X^{(1)}_{n-1},...,X^{(\ell)}_{\ell (n-1)}),\, n=1,2,...
 \]
 for some bounded Borel function $g$ (see \cite{IL}).

In the discrete time dynamical systems case we consider $X_n(\om)=g\circ f^n
(\om),\, n\geq 0$ 
where $g$ is a H\" older continuous vector function and
$f:\Om\to \Om$ is a $C^2$ Axion A diffeomorphism on a hyperbolic set or
a topologically mixing subshift of finite type or a $C^2$ expanding 
transformation. Here $X_n,\, n\geq 0$ is considered as a stationary
process on the probability space $(\Om,\cF,P)$ where $\Om$ is the corresponding
phase space, $\cF$ is the Borel
$\sig$-algebra and $P$ is a Gibbs measure constructed by a H\" older 
continuous function (see \cite{Bow}). Then the exponentially fast $\psi$-mixing
 holds true (see \cite{Bow}) with respect to the family of (finite)
$\sig$-algebras generated by cylinder sets in the symbolic setup of subshifts
of finite type or with respect to the corresponding $\sig$-algebras constructed
via Markov partitions in the Axiom A and expanding cases. 

Here the process $(X^{(1)}_n,...,X^{(\ell)}_{\ell n})$ is generated by the 
product dynamical system 
\[
(f\times f^2\times\cdots\times f^\ell)^n(\om_1,\om_2,...,\om_\ell)=
 (f^n\om_1,f^{2n}\om_2,...,f^{\ell n}\om_\ell)
 \]
 so that
 \begin{equation}\label{7.3}
 (X^{(1)}_n(\om_1),...,X^{(\ell)}_{\ell n}(\om_\ell))=
 G\circ(f\times f^2\times\cdots\times f^\ell)^n(\om_1,\om_2,...,\om_\ell)
 \end{equation}
 where $G(\om_1,\om_2,...,\om_\ell)=(g(\om_1),g(\om_2),...,g(\om_\ell))$.
 The above product dynamical system has similar properties as the original 
 dynamical system $f^n\om,\, n\geq 0$ itself, in particular, it satisfies
 large deviations bounds with respect to Gibbs measures constructed by
 H\" older continuous functions and exponentially fast $\psi$-mixing holds
 true, as well. The existence of the limit (\ref{2.4}) and its form
  follows from \cite{Ki2}. Here $\Pi_t(\al)$
turns out to be the topological pressure for the function $(\al, F)+\vf$
where $\vf$ is the potential of the corresponding Gibbs measure (for the
product system). The differentiability
properties of $\Pi_t(\al)$ in $\al$ are well known and, again, the Hessian
matrix $\nabla^2_\al\Pi_t(\al)|_{\al=0}$ is positively definite if and only if
for each $\al\in\bbR^d,\,\al\ne 0$ the limiting variance (\ref{7.2}) is
positive where the expectation should be taken with respect to the 
chosen Gibbs measure (see \cite{PP}, \cite{GH}, \cite{Ki1}, \cite{Ki2}
and references there). The latter holds true unless there exists a
coboundary representation $(\al,F)=g\circ f-g$ for some bounded Borel
function $g$.

\subsection{Some properties of rate functions}\label{subsec7.2}
We collect here few properties of rate functions of large deviations
which are essentially well known but hard to find in major books on large
deviations. First, observe that if $\Pi(\al)$, $\al\in\bbR^d$ is a twice
differentiable function such that $\Pi(0)=0,\,\nabla_\al\Pi(\al)|_{\al=0}
=0$ then $\Pi(\al)=o(|\al|)$, and so
\begin{equation}\label{7.4}
I(\be)=\sup_\al((\al,\be)-\Pi(\al))>0
\end{equation}
unless $\be=0$. Indeed, by the above
\[
I(\del\be)\geq\del|\be|^2-\Pi(\del\be)>0
\]
if $\del>0$ is small enough. Curiously, positivity of the rate function is not
studied in several books on large deviations without which upper large
deviations bounds do not make much sense.

Next, assume, in addition, that $\Pi$ is convex and has a positively definite
at zero Hessian matrix $\nabla_\al^2\Pi(\al)|_{\al=0}$. Then $\Pi(\al)\geq
0$ for all $\al\in\bbR^d$ and for some $\del_1,\del_2>0$,
\begin{equation}\label{7.5}
\Pi(\al)\geq\del_1|\al|\quad\mbox{provided}\quad |\al|>\del_2.
\end{equation}
It follows that if $|\be|<\del_1$ then $\al_\be=\arg\sup((\al,\be)-\Pi(\al))$ 
satisfies $|\al_\be|\leq\del_2$ and, in particular, $I(\be)<\infty$, i.e.
$I(\be)$ is finite in some neighborhood of $0$. 

Next, under the above conditions on $\Pi$ suppose that $I(\be)<\infty$
for some $\be\ne 0$. Then 
\begin{equation}\label{7.6}
I((1+\del)\be)>I(\be)\quad\mbox{for any}\quad\del>0.
\end{equation}
Indeed, for any $\ve>0$ there exists $\al_{\be,\ve}$ such that
\[
(\al_{\be,\ve},\be)-\Pi(\al_{\be,\ve})\geq I(\be)-\ve.
\]
Since $\Pi(\al_{\be,\ve})\geq 0$ we have
\[
I((1+\del)\be)\geq (1+\del)(\al_{\be,\ve},\be)-\Pi(\al_{\be,\ve})\geq
I(\be)+\del(I(\be)-\ve)-\ve>I(\be)
\]
provided $\ve< \del(1+\del)^{-1}I(\be)$ yielding (\ref{7.6}).

In the Erd\H os-R\' enyi law type results it is important to know where
a rate function $I(\be)$ is finite. This issue is hidden inside the functional
form of Theorems \ref{thm2.1} but appears explicitly in
Corollary \ref{cor2.2} and in its original form (\ref{1.1}). The discussion
on finiteness of rate functions is hard to find in books on large deviations
though without studying this issue lower bounds there do not have much sense.
We start with the rate functional $J(\nu)$ of the second level of large 
deviations for occupational measures
\begin{equation}\label{7.7}
\zeta_n=\frac 1n\sum_{k=0}^{n-1}\del_{X_k},
\end{equation}
 where $\del_x$ denotes
the unit mass at $x$ (see \cite{Ki1}). Explicit formulas for $J(\nu)$ are
known when $X_k$ is a Markov chain whose transition probability satisfies
(\ref{5.1}) and when $X_k=f^kx$ with $f$ being an Axiom A diffeomorphism,
expanding transformation or subshift of finite type. In the former case
(see \cite{DV}),
\begin{equation}\label{7.8}
J(\nu)=-\inf_{u>0,\,\mbox{\small continuous}}\int\ln(\frac {Pu}u)d\nu
\end{equation}
and in the latter case (see \cite{Ki1}),
\begin{equation}\label{7.9}
\gathered J(\nu)=\left\{
\aligned &-\int\varphi d\mu -h_\nu(f)
\ \text{if}\, \nu \, \text{is}\, f\text{-invariant,}\\
&\infty\ \text{otherwise}
\endaligned\right. \endgathered
\end{equation}
where $h_\nu(f)$ is the Kolmogorov--Sinai entropy of $f$ with respect to
 $\nu$ and $\vf$ is the potential of the corresponding Gibbs measure $\mu$
 playing the role of probability here.

Necessary and sufficient conditions for finiteness of $J(\nu)$ in the Markov
chain case are given in \cite{DV} while in the above
dynamical systems cases $J(\nu)<\infty$ for any $f$-invariant measure $\nu$.
If 
\begin{equation}\label{7.10}
\Pi(\al)=\lim_{n\to\infty}\frac 1n\ln E\exp\big(\sum_{j=0}^{n-1}(\al,
G(X_j))\big),
\end{equation}
 where $X_t$ is a stationary process as above and 
 $G\not\equiv 0$ is a continuous vector function
with $EG(X_0)=0$, then by the contraction principle (see, for instance,
\cite{DZ}) the rate function $I(\be)$ given by (\ref{7.4}) can be represented
 as
 \begin{equation}\label{7.11}
 I(\be)=\inf\{ J(\nu):\,\int Gd\nu=\be\}
 \end{equation}
 where the infinum is taken over the space $\cP(M)$ of probability measures
 on $M$.
 
 Set 
 \[
 \Gam=\{\be\in\bbR^d:\,\exists\nu\in\cP(M)\,\,\mbox{such that}\,\,\int Gd\nu=
 \be \,\,\mbox{and}\,\, J(\nu)<\infty\}
 \]
 and let $Co(\Gam)$ be the interior of the convex hull of $\Gam$. Then
 \begin{equation}\label{7.12}
 I(\be)<\infty\,\,\mbox{for any}\,\,\be\in Co(\Gam).
 \end{equation}
 Indeed, any $\be\in Co(\Gam)$ can be represented as $\be=p_1\be_1+p_2\be_2$
 with $\be_1,\be_2\in\Gam$, $p_1,p_2\geq 0$ and $p_1+p_2=1$. Then $\be_1=
 \int Gd\nu_1,\,\be_2=\int Gd\nu$, and so $\int Gd\nu=\be$ for $\nu=p_1\nu_1
 +p_2\nu_2$. Since $J(\nu_1),\, J(\nu_2)<\infty$ then by convexity of $J$ we
 have that $J(\nu)\leq p_1J(\nu_1)+p_2J(\nu_2)<\infty$, and so (\ref{7.12})
 holds true.
 
 When $d=1$, i.e. when $G$ is a (not vector) function we can give another
 description of the domain where $I(\be)<\infty$. In this case set
 \begin{equation}\label{7.13}
 \be_+=\sup\{\be:\,\be\in\Gam\}\,\,\mbox{and}\,\,\be_-=\inf\{\be:\be\in\Gam\}.
 \end{equation}
 Then by (\ref{7.12}), $I(\be)<\infty$ for any $\be\in (\be_-,\be_+)$. It is
 possible to extract from \cite{DK} that under $\psi$-mixing,
 \begin{equation}\label{7.14}
 \be_+=\lim_{n\to\infty}\frac 1n ess\sup\sum_{j=0}^{n-1}G(X_j)\,\,\mbox{and}
 \,\, \be_-=\lim_{n\to\infty}\frac 1ness\inf\sum_{j=0}^{n-1}G(X_j).
 \end{equation}

\bibliography{matz_nonarticles,matz_articles}
\bibliographystyle{alpha}

\end{document}